\newtheorem{theorem}{Theorem}[section]
\newtheorem{lemma}{Lemma}[section]
\newtheorem{corollary}{Corollary}[section]
\newtheorem{definition}{Definition}[section]
\newtheorem{remark}{Remark}[section]
\newtheorem{question}{Question}[section]
\numberwithin{equation}{section}
\def\Z{\Bbb Z}
\def\R{\Bbb R}
\def\d{\partial}
\def\e{\epsilon}
\def\a{\alpha}
\def\b{\beta}
\def\g{\gamma}
\title{The Disk-Based Origami Theorem and a Glimpse of Holography for Traversing Flows}
\author{Gabriel Katz}
\address{MIT, Department of Mathematics, 77 Massachusetts Ave., Cambridge, MA 02139, U.S.A.}
\email{gabkatz@gmail.com}
\begin{document}

\maketitle

\begin{abstract} 
This paper describes a mechanism by which a traversally generic flow $v$ on a smooth connected manifold $X$ with boundary produces a compact $CW$-complex $\mathcal T(v)$, which is homotopy equivalent to $X$ and such that $X$ embeds in $\mathcal T(v)\times \R$. The $CW$-complex $\mathcal T(v)$ captures some residual information about the smooth structure on $X$ (such as the stable tangent bundle of $X$). Moreover, $\mathcal T(v)$ is obtained from a simplicial \emph{origami map} $O: D^n \to \mathcal T(v)$, whose source space is a disk $D^n \subset \d X$ of dimension $n = \dim(X) -1$. The fibers of $O$ have the cardinality $(n+1)$ at most. 

The knowledge of the map $O$, together with the restriction to $D^n$ of a Lyapunov function $f:X \to \R$ for $v$, make it possible to reconstruct the topological type of the pair $(X, \mathcal F(v))$, were $\mathcal F(v)$ is the $1$-foliation, generated by $v$. This fact motivates the use of ``holography" in the title. 
\end{abstract}


\section{Trivia about traversing flows on manifolds with boundary}

The results of this paper are quite direct implications of our study of boundary generic and traversally generic flows in \cite{K1}, \cite{K2}. \smallskip

For the reader convenience, we start with presenting few basic definitions and facts related to the \emph{boundary generic traversing} and \emph{traversally generic} vector fields on manifolds with boundary.

Let $X$ be a compact connected smooth $(n+1)$-dimensional manifold with boundary. A vector field $v$ is called \emph{traversing} if each $v$-trajectory is ether a closed interval with both ends residing in $\d X$, or a singleton also residing in $\d X$ (see \cite{K1} for the details). In fact, $v$ is traversing if and only if it admits a smooth Lyapunov function $f: X \to \R$, such that $df(v) > 0$ in $X$ (see \cite{K1}). 

For traversing fields $v$, the trajectory space $\mathcal T(v)$ is homology equivalent  to $X$ (Theorem 5.1, \cite{K3}).   \smallskip

We denote by $\mathcal V_{\mathsf{trav}}(X)$ the space of traversing fields on $X$. \smallskip

We consider an important subclass of traversing fields which we call \emph{traversally generic} (see formula (2.4) and Definition 3.2 from \cite{K2}).

For a traversally  generic field $v$, the trajectory space $\mathcal T(v)$ is stratified by closed subspaces, labeled by the elements $\omega$ of an \emph{universal} poset $\Omega^\bullet_{'\langle n]}$, which depends only on $\dim(X) = n+1$ (see \cite{K3}, Section 2, for the definition and properties of $\Omega^\bullet_{'\langle n]}$). The elements $\omega \in \Omega^\bullet_{'\langle n]}$ correspond to combinatorial patterns that describe the way in which $v$-trajectories $\g \subset X$ intersect the boundary $\d X$. Each intersection point $a \in \g \cap \d X$ acquires a well-defined \emph{multiplicity} $m(a)$, a natural number that reflects the order of tangency of $\g$ to $\d X$ at $a$ (see \cite{K1} and Definition 2.1 for the expanded definition of $m(a)$). So $\g \cap \d_1X$ can be viewed as a \emph{divisor} $D_\g$ on $\g$, an ordered set of points in $\g$ with their multiplicities. Then $\omega$ is just the ordered sequence of multiplicities $\{m(a)\}_{a \in \g \cap \d X }$, the order being prescribed by $v$. 

The support of the divisor $D_\g$ is: either (1) a singleton $a$, in which case $m(a) \equiv 0 \; \mod \, 2$, or (2) the minimum and maximum points of $\sup D_\g$ have \emph{odd} multiplicities, and the rest of the points have \emph{even} multiplicities. \smallskip

Let 
\begin{eqnarray}\label{eq2.1}
m(\g) =_{\mathsf{def}} \sum_{a \in \g\, \cap \, \d_1X }\; m(a)\quad \text{and} \quad m'(\g) =_{\mathsf{def}} \sum_{a \in \g \, \cap \, \d_1X }\; (m(a) -1).
\end{eqnarray} 
Similarly, for $\omega =_{\mathsf{def}} (\omega_1, \omega_2, \dots , \omega_i,  \dots )$ we introduce the \emph{norm} and the \emph{reduced norm} of $\omega$ by the formulas: 

\begin{eqnarray}\label{eq2.2}
|\omega| =_{\mathsf{def}} \sum_i\; \omega_i \quad \text{and} \quad |\omega|'  =_{\mathsf{def}} \sum_i\; (\omega_i -1).
\end{eqnarray}
\smallskip

We assume that $X$ is embedded in a larger smooth manifold $\hat X$, and the vector field $v$ is extended to a non-vanishing vector field $\hat v$ in $\hat X$. We treat the pair $(\hat X, \hat v)$ as a \emph{germ}, containing $(X, v)$.

Let $\d_jX =_{\mathsf{def}} \d_jX(v)$ denote the locus of points $a \in \d_1X$ such that the multiplicity of the $v$-trajectory $\g_a$ through $a$ at $a$ is greater than or equal to $j$. (By definition, $\d_1X(v) = \d X$.) This locus has a description in terms of  an auxiliary function $z: \hat X \to \R$ which satisfies the following three properties:
\begin{eqnarray}\label{eq2.3}
\end{eqnarray}

\begin{itemize}
\item $0$ is a regular value of $z$,   
\item $z^{-1}(0) = \d_1X$, and 
\item $z^{-1}((-\infty, 0]) = X$. 
\end{itemize}

In terms of $z$, the locus $\d_jX =_{\mathsf{def}} \d_jX(v)$ is defined by the equations: 
$$\{z =0,\; \mathcal L_vz = 0,\; \dots, \;  \mathcal L_v^{(j-1)}z = 0\},$$
where $\mathcal L_v^{(k)}$ stands for the $k$-th iteration of the Lie derivative operator $\mathcal L_v$ in the direction of $v$ (see \cite{K2}). 

The pure stratum $\d_jX^\circ \subset \d_jX$ is defined by the additional constraint  $\mathcal L_v^{(j)}z \neq 0$. The locus $\d_jX$ is the union of two loci: $(1)$ $\d_j^+X$, defined by the constraint  $\mathcal L_v^{(j)}z \geq  0$, and $(2)$ $\d_j^-X$, defined by the constraint  $\mathcal L_v^{(j)}z \leq  0$. The two loci, $\d_j^+X$ and $\d_j^-X$, share a common boundary $\d_{j+1}X$.
\smallskip

\noindent{\bf Definition 2.1} The multiplicity $m(a)$, where $a \in \d X$, is the index $j$ such that $a \in \d_jX^\circ$. 

\hfill $\diamondsuit$
\smallskip

The characteristic property of \emph{traversally generic} fields is that they admit special flow-adjusted coordinate systems, in which the boundary is given by  quite special polynomial equations (see formula (\ref{eq2.4})) and the trajectories are parallel to one of the preferred coordinate axis (see  \cite{K2}, Lemma 3.4). For a traversally generic $v$ on a $(n+1)$-dimensional $X$, the vicinity $U \subset \hat X$ of each $v$-trajectory $\g$ of the combinatorial type $\omega$ has a special coordinate system $$(u, \vec x, \vec y): U \to \R\times \R^{|\omega|'} \times \R^{n-|\omega|'}.$$ By Lemma 3.4  and formula $(3.17)$ from \cite{K2}, in these coordinates, the boundary $\d_1X$ is given  by the polynomial equation: 
\begin{eqnarray}\label{eq2.4}
 \wp (u, \vec x) =_{\mathsf{def}} \prod_i \big[(u-i)^{\omega_i} + \sum_{l = 0}^{\omega_i-2} x_{i, l}(u -i)^l \big] = 0
 \end{eqnarray}
of an even degree $|\omega|$ in $u$. Here  $i \in \Z$ runs over the distinct roots of  $\wp (u, \vec 0)$ and  $\vec x =_{\mathsf{def}} \{ x_{i, l}\}_{i,l}$.   
At the same time, $X$ is given by the polynomial inequality $\{\wp(u, \vec x) \leq 0\}$.  Each $v$-trajectory in $U$ is produced by freezing all the coordinates $\vec x, \vec y$, while letting $u$ to be free.
\smallskip

We denote by $\mathcal V^\ddagger(X)$ the space of traversally  generic fields on $X$.  It turns out that $\mathcal V^\ddagger(X)$ is an \emph{open} and \emph{dense} (in the $C^\infty$-topology) subspace of $\mathcal V_{\mathsf{trav}}(X)$ (see \cite{K2}, Theorem 3.5).
\smallskip

We denote by $X(v, \omega)$ the union of $v$-trajectories whose divisors are of a given combinatorial type $\omega \in \Omega^\bullet_{'\langle n]}$. Its closure $\cup_{\omega' \preceq_\bullet \omega}\; X(v, \omega')$ is denoted by $X(v, \omega_{\succeq_\bullet})$.
\smallskip

Each pure stratum $\mathcal T(v, \omega) \subset \mathcal T(v)$ is an open smooth manifold and, as such, has a ``conventional" tangent bundle. \smallskip


\section{How traversally generic flows generate the origami homotopy models of manifolds with boundary}
Let $X$ be an $(n + 1)$-dimensional compact connected smooth manifold, carrying a traversally  generic vector field $v$. Abusing notations, we use the same symbol ``$\g$" for the $v$-trajectory in $X$ and for the point in the trajectory space $\mathcal T(v)$ it represents.\smallskip

We introduce a new filtration $\big\{\mathcal T_{\{max \geq k\}}^+(v)\big\}_{k\, \in\, [1, n+1]}$ of the trajectory space $\mathcal T(v)$ by closed subspaces (actually, by cellular subcomplexes). This stratification is cruder than the stratification $\big\{\mathcal T(v, \omega_{\succeq_\bullet})\big\}_{\omega\, \in\, \Omega^\bullet_{'\langle n]}}$. By definition, a trajectory $\g \in \mathcal T_{\{max \geq k\}}^+(v)$ if $\g \cap \d_1^+X$ contains at least one point $x$ of multiplicity greater than or equal to $k$. Moreover, we insist that  this  $x \in \d_k^+X$ (and not in $\d_k^-X^\circ$). In other words, $ \mathcal T_{\{max \geq k\}}^+(v)$ is exactly the image of $\d_k^+X(v)$ in the trajectory space under the obvious map $\Gamma: X \to \mathcal T(v)$.  In particular, $\mathcal T_{\{max \geq 1\}}^+(v) = \mathcal T(v)$. 
\smallskip

We notice that Corollary 3.3 from \cite{K1} and Theorems 3.4 and 3.5 from \cite{K2} imply that there is an nonempty open subset $\mathcal D(X) \subset \mathcal V^\ddagger(X)$  such that, for each field  $v \in \mathcal  D(X)$, all the strata $\{\d_j^+X\}_j$ are diffeomorphic to closed balls, except for $\d_n^+X$ (which  is a finite union of 1-balls) and for the finite set $\d_{n+1}^+X$. 

So let us consider a model filtration
\begin{eqnarray}\label{eq6.47} 
Z^0 \subset Z^1 \subset D^2 \subset D^3 \subset \dots D^{n-1} \subset D^n
\end{eqnarray}
of  a closed ball $D^n$, such that: 

(1) each ball $D^j \subset \d D^{j+1}$, 

(2)  $Z^1$ is a disjoint union of finitely many arcs in $\d D^2$, 

(3) $Z^0 \subset \d Z^1$ is a finite set. 
\smallskip

\begin{figure}[ht]\label{fig1}
\centerline{\includegraphics[height=2.5in,width=3.5in]{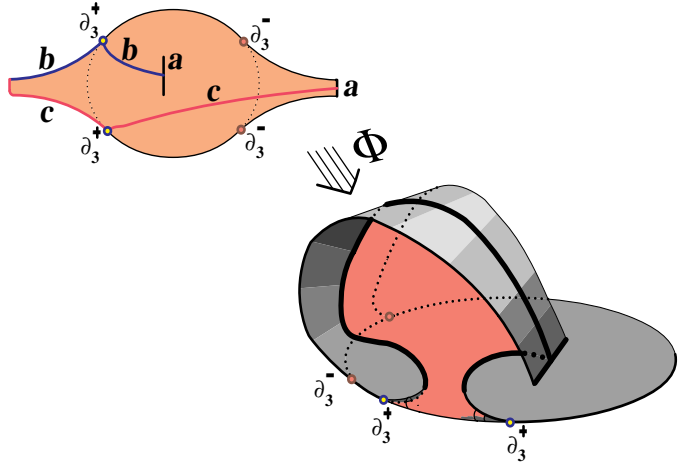}}
\bigskip
\caption{\small{An origami map $O: D^2 \rightarrow K$ of the $2$-disk onto a collapsable $2$-complex $K$. Note the pairs of arcs in $D^2$, marked by the labels $a, b, c$; each pair is identified by $O$ into a single arc, residing in $K$. The map  $O$ is a $3$-to-$1$ at most.}}
\end{figure}

It turns out that, for $v \in \mathcal  D(X)$, the trajectory space $\mathcal T(v)$ can be produced by an origami-like folding of the ball $D^n$ (see Figure \ref{fig1} for an example of an origami map on a $2$-ball). \smallskip

The result below should be compared with Theorem 1 from \cite{FR}.  It claims that a closed $3$-manifold $X$ has a spine which is the image of an immersed  $2$-sphere in general position in $X$. Theorem \ref{th6.11} should be compared also with somewhat  similar Theorem 5.2 in \cite{K}, the latter dealing with the flow-generated spines, not trajectory spaces (see \cite{GR} for the brief description of spines). 

\begin{theorem}\label{th6.11} {\bf (Trajectory spaces as the ball-based origami)} \hfill\break
Any compact connected smooth $(n+1)$-manifold $X$ with boundary admits a traversally  generic vector field $v$ such that:

\begin{itemize}
\item  its trajectory space $\mathcal T(v)$ is the image of a closed ball $D^n \subset \d_1X$ under a continuous  cellular  map $\Gamma: D^n \to \mathcal T(v)$, which is $(n +1)$-to-$1$ at most. \smallskip

\item The $\Gamma$-image in $\mathcal T(v)$ of each ball $D^k$ from the filtration $(\ref{eq6.47})$, is the space \hfill\break $\mathcal T_{\{max \geq n + 1- k\}}^+(v)$, and the restriction 
$\Gamma|_{D^k}$ is a $\lceil \frac{n}{n - k} \rceil$-to-$1$ map at most.  

For $n > 2$, the maps $$\Gamma: Z^1 \to \mathcal T_{\{max \geq n\}}^+(v)\; \; \text{and} \;\;  \Gamma: Z^0 \to \mathcal T_{\{max \geq n +1\}}^+(v)$$ are both bijective. \smallskip

\item The restrictions of $\Gamma$ to $\d D^{k + 1} \setminus D^k$ are $1$-to-$1$ maps for all  $1 < k < n$, and so are the restrictions of $\Gamma$ to $\d D^2 \setminus Z^1$ and $\d Z^1 \setminus Z^0$. \smallskip

\item The vector fields $v$, for which the above properties hold, form an open nonempty set in the space $\mathcal V^\ddagger(X)$ of traversally  generic fields on $X$, and thus an open set in the space $\mathcal V_{\mathsf{trav}}(X)$ of all traversing fields. 
\end{itemize}
\end{theorem}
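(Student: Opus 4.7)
My plan is to take $v$ in the open nonempty set $\mathcal D(X) \subset \mathcal V^\ddagger(X)$ described just before the theorem, and to identify the model filtration (\ref{eq6.47}) with the natural filtration of the positive boundary strata: set $D^n := \d^+_1 X$, $D^k := \d^+_{n+1-k} X$ for $2 \leq k \leq n-1$, $Z^1 := \d^+_n X$, and $Z^0 := \d^+_{n+1} X$. By the defining hypothesis on $\mathcal D(X)$ these are closed balls, a disjoint union of arcs, and a finite set respectively. The nestings $D^k \subset \d D^{k+1}$ required by condition (1) come from the general identities $\d(\d^+_j X) = \d_{j+1}X = \d^+_{j+1}X \cup \d^-_{j+1}X$ recalled in Section 1, which imply $\d^+_{j+1}X \subset \d(\d^+_jX)$; conditions (2) and (3) hold by construction.

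I then define $\Gamma$ as the restriction to $\d^+_1 X = D^n$ of the tautological quotient $\pi: X \to \mathcal T(v)$, so $\Gamma$ is automatically continuous; cellularity follows because $\pi$ sends the combinatorial-type stratification of $X$ onto that of $\mathcal T(v)$, which refines the filtration $\{\mathcal T^+_{\{max \geq k\}}\}$. Surjectivity and the image identities $\Gamma(D^k) = \mathcal T^+_{\{max \geq n+1-k\}}$ follow from the observation that every trajectory either is a singleton sitting in $\d^+_\omega X$ as an even-order local minimum of $z$ from outside, or else has an exit point (its last intersection with $\d X$) at which $\mathcal L_v^{(\omega_q)} z > 0$, placing that exit in $\d^+_{\omega_q}X \subset \d^+_1X$. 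The fiber cardinality estimates rest on the dimension identity $|\omega|' = \sum_i (\omega_i - 1) \leq n$ coming from the local coordinates (\ref{eq2.4}): a point of $\g \cap \d^+_{n+1-k}X$ has multiplicity $\geq n+1-k$ and therefore contributes $\geq n-k$ to $|\omega|'$, so the fiber contains at most $\lceil n/(n-k)\rceil$ points; the sharp global bound $n+1$ is attained by the extremal pattern $\omega = (1,2,\dots,2,1)$. The bijectivity of $\Gamma|_{Z^1}$ and $\Gamma|_{Z^0}$ when $n > 2$ is the same counting: two points of multiplicity $\geq n$ on a single trajectory would force $|\omega|' \geq 2(n-1) > n$, and similarly for $n+1$.

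Openness of the qualifying set of $v$ in $\mathcal V^\ddagger(X)$ is inherited from that of $\mathcal D(X)$, which is itself $C^\infty$-open by the stability of the smooth stratified type of the family $\{\d^+_jX\}$ under small perturbations of $v$. The step I expect to be the main obstacle is the claim that $\Gamma$ is $1$-to-$1$ on $\d D^{k+1}\setminus D^k$: with the identification above this set equals the pure open stratum $\d^-_{n+1-k}X^\circ$, and configurations such as $\omega = (1,2,2,1)$ place two middle even tangencies of a single trajectory into $\d^-_2^\circ$. Resolving this will require either combining the signed refinement $\d^\pm_j$ with a finer look at the odd-even parity of admissible $\omega$'s, using the hypothesis $1 < k < n$ to exclude the problematic patterns, or else replacing the naive choice $D^k = \d^+_{n+1-k}X$ by a slightly enlarged disk inside $\d D^{k+1}$ that absorbs the duplicate $\d^-$-points, with the room to do so guaranteed by the fact that $\d^+_{n+1-k}X$ sits as one hemisphere of the $k$-sphere $\d D^{k+1}$. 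I expect this reorganization of the lower-dimensional skeleton of the filtration to be the technical core of the argument.
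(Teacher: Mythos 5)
Your handling of the first, second, and fourth bullets follows essentially the paper's own route: take $v$ in the open nonempty set $\mathcal D(X)$, identify $D^k$ with $\d_{n+1-k}^+X(v)$, let $\Gamma$ be the restriction of the quotient map $X\to\mathcal T(v)$, bound the fibers by the codimension count $|\omega|'\le n$ (the paper phrases the very same count as a general-position condition on the flow-transported tangent spaces $T_x(\d_j^+X)$ inside a transversal $n$-section), and deduce openness from the stability of the stratification $\{\d_j^+X(v)\}_j$ under perturbation. Two small imprecisions there: for $k=n$ the expression $\lceil n/(n-k)\rceil$ degenerates, and the global bound $(n+1)$ needs the separate observation that a trajectory carries at most $|\omega|'+2\le n+2$ boundary points, of which one odd-multiplicity endpoint of the divisor lies in $\d_1^-X^\circ$ and is therefore excluded from $D^n=\d_1^+X$; exhibiting the extremal pattern $(1,2,\dots,2,1)$ shows sharpness but is not itself the upper bound. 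Your direct count $2(n-1)>n$ for the bijectivity on $Z^1$, $Z^0$ when $n>2$ is correct and is in fact sharper than the $\lceil n/(j-1)\rceil$ formula.

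The genuine gap is exactly where you flagged it: the third bullet. Neither of your proposed repairs is what is needed, and enlarging $D^k$ inside $\d D^{k+1}$ would destroy the identity $\Gamma(D^k)=\mathcal T^+_{\{max\ge n+1-k\}}(v)$. The missing idea is the paper's convexity (``protection'') argument for the negative strata, and your difficulty comes from attaching to the interior tangencies the sign opposite to the one operative in the paper's proof. In the convention the proof actually uses, the interior even-multiplicity points of a divisor --- the tangencies of $\g$ to $\d X$ from \emph{inside} $X$, e.g.\ the two middle points of $\omega=(1,2,2,1)$ --- lie in $\d_2^+X^\circ\subset D^{n-1}$, where repeated occurrences on one trajectory are allowed and are exactly what the $\lceil n/1\rceil=n$ bound of the second bullet governs. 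The locus $\d_2^-X^\circ$ consists instead of the tangencies from \emph{outside}: at such a point $a$ one has $\g_a\cap X=\{a\}$ locally, and since a traversing trajectory meets $X$ in a connected set (an interval or a point), $\g_a\cap X=\{a\}$ globally. Hence every point of $\d_2^-X^\circ$ is a singleton trajectory and $\Gamma$ is trivially injective there. The same local convexity shows that each $\d_j^-X^\circ$ meets any trajectory in at most one point: for even $j$ these are again the isolated (singleton) contacts, while for odd $j$ they are among the at most two odd-multiplicity endpoints of the divisor, of which only one --- the entrance or the exit, never both --- carries the minus sign. Without this observation the identification $\d D^{k+1}\setminus D^k=\d^-_{n+1-k}X^\circ$ is correct but injectivity does not follow from the counting $|\omega|'\le n$ alone, as your $(1,2,2,1)$ example shows.
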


\begin{proof}  If $\d_1X$ has several connected components, we pick one of them, say, $\d_1^\star X$. The union of the remaining boundary components  is denoted by $\d_1^{\star\star}X$. We can construct a Morse function $f: X \to \R$ so that it is locally constant on $\d_1^{\star\star}X$, and these constants are the local maxima of $f$ in a collar $U$ of $\d_1^{\star\star}X$ in $X$. Then by finger moves (as in the proof of Lemma 3.2, \cite{K1}) we eliminate all critical points of $f$ without changing $f$ in $U$. Pick a Riemmanian metric on $X$ and let $v$ be the gradient field of $f$.
Evidently, $\d_1^{\star\star}X \subset \d_1^-X(v)$ and $\d_1^+X(v)  \subset \d_1^\star X$.

By an argument as in \cite{K1}, Corollary 3.3, in the vicinity of $\d_1^\star X$, we can deform the field $v$ to a new $f$-gradient-like field so that all the manifolds $\d_1^+X$, $\d_2^+X$,  \dots , $\d_{n - 1}^+X$, residing in  the component $\d_1^\star X$, will be diffeomorphic to balls, and $Z^1 =_{\mathsf{def}} \d_n^+X$ will consist of a number of arcs. The argument in Corollary 3.3 from \cite{K1} constructs such a $v$ to be boundary generic in the sense of Definition 2.1, \cite{K1}. Moreover, by \cite{K2}, Theorem 3.5, we can further perturb $v$ inside $X$,  without changing it on $\d_1X$, so that the new perturbation will be a traversally  generic field. Abusing notations, we continue to denote the new field  by $v$. 
\smallskip

The locus $\d_{n + 1 - k}^+X(v)$ is diffeomorphic to the disk $D^k$. We notice that, for $k > 2$, for each point $x \in D^k$, the $v$-trajectory $\g_x$ has at least one tangency point of  multiplicity $n + 1 - k$ residing in $\d_1^+X$, namely  $x$ itself. Thus, $\Gamma$ maps $D_k$ onto $\mathcal T_{\{max \geq n + 1- k\}}^+(v)$. Similarly,   $\Gamma : Z^1 \to  \mathcal T_{\{max \geq n\}}^+(v)$, $\Gamma: Z^0 \to \mathcal T_{\{max \geq n + 1\}}^+(v)$ are surjective maps.\smallskip

We notice that, due to the convexity of the flow in their neighborhoods, the points of $\d_j^-X$ are ``protected" in the following sense:  no $v$-trajectory can reach $\d_2^-X(v)$, unless the trajectory is a singleton which belongs to $\d_2^-X(v)$ in the first place, no $v$-trajectory  can reach $\d_3^-X(v)$, unless the trajectory is a singleton $\d_3^-X(v)$, and so on ... 
In particular, no $v$-trajectory through a point of $\d_1^+X(v)$ can reach $\d_2^-X(v)$, unless the trajectory is a singleton which belongs to $\d_2^-X(v)$ in the first place, no $v$-trajectory through a point of $\d_2^+X(v)$ can reach $\d_3^-X(v)$, unless the trajectory is a singleton $\d_3^-X(v)$, and so on ...  The claim also follows from Theorem 2.2, \cite{K2}.  

Therefore all the maps $\{\Gamma: \d_j^-X(v) \setminus \d_{j+1}X(v) \to \mathcal T(v)\}_j$ are $1$-to-$1$. Thus the claim in the third bullet has been validated.\smallskip

For a traversally generic $v$, by Corollary 5.1 from \cite{K3}, the map $\Gamma: \d_1X \to \mathcal T(v)$ is $(n + 2)$-to-$1$ at most. Since each trajectory, distinct from a singleton, must exit through $\d_1^-X$ at a point of an odd multiplicity, the same argument shows that $\Gamma: \d_1^+X \to \mathcal T(v)$ is $(n + 1)$-to-$1$ at most. Because for $v \in \mathcal V^\ddagger(X)$, the tangent spaces to $\d_j^+X^\circ$ along each trajectory $\g$ must form, with the help of the flow, a stable configuration in the germ of a $n$-section $S$, transversal to $\g$ (see \cite{K2}, Definition 3.2). 
Since $\dim(T_x(\d_j^+X)) = n +1 - j$ for every point $x \in \g \cap \d_j^+X^\circ$ and the flow-generated images of the spaces $\{T_x(\d_j^+X)\}_{x \in \g \cap \d_j^+X^\circ}$ must be in general position in a $n$-dimensional space $S$, the cardinality of the set $\g \cap \d_j^+X^\circ$ cannot exceed $\lceil \frac{n}{j -1} \rceil = \lceil \frac{n}{n - k} \rceil$, provided $k < n$. The statement in second bullet has been established. 
\smallskip

By the second bullet of Theorem 3.4, \cite{K2}, the smooth topological type of the stratification $\{\d_jX(v)\}_j$ is stable under perturbations of $v$ within the space $\mathcal B^\dagger(X)$ of boundary generic fields. The same argument shows that $\{\d_j^+X(v)\}_j$ is stable as well. Thus, for all fields $v'$ sufficiently close to $v$, the stratification $\{\d_j^+X(v')\}_j$ will remain as in (\ref{eq6.47}). By Theorem 3.5 from \cite{K2}, all vector fields, sufficiently close to a traversally  generic vector field, will remain traversally generic. Therefore this fact gives the desired control of the cardinality for the fibers of the maps $\Gamma: \d_j^+X(v') \to \mathcal T(v')$ and of the smooth topology of the stratification $\{\d_j^+X(v')\}_j$ within an open neighborhood of $v$ in $\mathcal V^\ddagger(X)$.  
\hfill 
\end{proof}

{\bf Remark 3.1.}
\noindent Recall that the trajectory space $\mathcal T(v)$ in the Origami Theorem \ref{th6.11}  is not only weakly homotopy equivalent to the manifold $X$ (see \cite{K3}, Theorem 5.1), but also carries a $n$-bundle $\tau$ whose pull-back under $\Gamma$ is stably isomorphic to the tangent bundle $TX$ (\cite{K4}, Lemma 2.1). As a result, $\tau$ and $TX$ share all stable  characteristic classes. So all this information about $X$ is hidden in a subtle way in \emph{the geometry of the origami map} $\Gamma: D^n \to \mathcal T(v)$. \hfill $\diamondsuit$ 
\smallskip

The Origami Theorem \ref{th6.11} oddly resembles the Noether Normalization Lemma in the Commutative Algebra \cite{No}, however, with the direction of the ramified morphism being reversed. Recall that, in its algebro-geometrical formulation, the Normalization Lemma states that any affine variety is a branched covering over an affine space. In  contrast, in our setting, many trajectory spaces $\mathcal T(v)$---rather intricate objects---have a simple and \emph{universal} ramified cover---the ball. 
\smallskip

To explain this analogy, for a traversally generic field $v$, consider the Lie derivation $\mathcal L_v$ of the algebra $C^\infty(X)$ of smooth functions on $X$. Its kernel $C^\infty(\mathcal T(v))$\footnote{$C^\infty(\mathcal T(v))$ is just a subalgebra of $C^\infty(X)$, not an ideal.} is a part of the long exact sequence of vector spaces: 
 $$0 \to C^\infty(\mathcal T(v)) \rightarrow C^\infty(X) \stackrel{\mathcal L_v}{\longrightarrow} C^\infty(X) \rightarrow \dots$$  By Definition \ref{def3.1}, $C^\infty(\mathcal T(v))$, the algebra of smooth functions on the space of trajectories, can be identified with the algebra of all smooth functions on $X$ that are constant along each $v$-trajectory.\smallskip

When a traversally  generic $v$ is such that $\d_1^+X(v)$ is diffeomorphic to $D^n$, then  employing Theorem \ref{th6.11} and with the help of the finitely ramified surjective map $$\Gamma_\d: D^n = \d_1^+X(v) \subset X \stackrel{\Gamma}{\longrightarrow} \mathcal T(v),$$ we get the induced monomorphism $\Gamma_\d^\ast: C^\infty(\mathcal T(v)) \to C^\infty(D^n)$ of algebras, where the target algebra $C^\infty(D^n)$ of smooth functions on the $n$-ball is \emph{universal} for a given dimension $n$. \smallskip

Any point-trajectory $\g \in \mathcal T(v)$ gives rise to the maximal ideal $\mathsf m_\g\, \lhd \, C^\infty(\mathcal T(v))$, comprising smooth functions on $\mathcal T(v)$ that vanish at $\g$. On the other hand, if $\mathsf m \, \lhd \, C^\infty(\mathcal T(v))$ is a maximal ideal and a function $h \in \mathsf m$ does not vanish on the compact $\mathcal T(v)$, then the function $1 = (\frac{1}{h})\cdot h \in \mathsf m$, so that $\mathsf m = C^\infty(\mathcal T(v))$. Thus every maximal ideal $\mathsf m \lhd C^\infty(\mathcal T(v))$, distinct from the algebra itself, is of the form $\mathsf m_\g$. \smallskip

The map $\Gamma^\d$ is finitely ramified with fibers of cardinality $(n+1)$ at most (\cite{K3}, Corollary 5.1). Therefore,  for any maximal ideal $\mathsf m \lhd  C^\infty(\mathcal T(v))$, its $\Gamma^\d$-induced image $(\Gamma^\d)^\ast(\mathsf m)$ is the intersection $\bigcap_i \,\mathsf m_i$ of $(n+1)$ maximal ideals $\mathsf m_i \lhd C^\infty(D^n)$ at most.  

One can think of smooth vector fields on $X$ as derivatives of the algebra $C^\infty(X)$. We denote the space of such operators by the symbol $\mathcal D(X)$. Let $\mathcal C^+(X) \subset C^\infty(X)$ denote the open cone, formed by all strictly positive functions. The gradient-like fields $v$ correspond to derivatives $\mathcal L_v \in \mathcal D(X)$ such that $\mathcal L_v(f) \in \mathcal C^+(X)$ for some $f \in  C^\infty(X)$.\smallskip

By Theorem 3.5 from \cite{K2}, the traversally  generic fields form a nonempty open set $\mathcal V^\ddagger(X)$ in the space of all vector fields and an open and dense set in the space of all traversing vector fields. Therefore, the previous considerations lead to the following reformulation of Theorem \ref{th6.11}.

\begin{corollary}\label{cor8.5}{\bf (The origami resolutions $\mathbf {C^\infty(D^n)}$ for the kernels of special derivatives of the algebra $\mathbf {C^\infty(X)}$)} \smallskip

Let $C^\infty(D^n)$ denote the algebra of smooth functions on the $n$-ball.

For any $(n+1)$-dimensional smooth connected and compact manifold $X$ with boundary, there exists an open nonempty subset $\mathcal Der^\odot(X) \subset \mathcal Der(X)$ of algebra derivatives $\mathcal L: C^\infty(X) \to C^\infty(X)$ that possess the following properties:  
\begin{itemize}
\item for any $\mathcal L \in \mathcal Der^\odot(X)$, there exists a function $f \in C^\infty(X)$ such that $\mathcal L(f) \in \mathcal C^+(X)$, the positive cone, 

\item for any $\mathcal L \in \mathcal Der^\odot(X)$, there exists a monomorphism of algebras\footnote{that is induced by the origami map $\Gamma: D^n \to \mathcal T(v)$} 
$$(\Gamma^\d)^\ast: \ker(\mathcal L) \to C^\infty(D^n)$$ such that, for any maximal ideal $\mathsf m \, \lhd \, \ker(\mathcal L)$, the image $(\Gamma^\d)^\ast(\mathsf m)$ is an intersection $\bigcap_i\, \mathsf m_i$ of $n +1$ maximal ideals $\mathsf m_i \lhd C^\infty(D^n)$ at most. \hfill $\diamondsuit$
 \end{itemize}
\end{corollary}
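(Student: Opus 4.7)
The plan is to translate Theorem \ref{th6.11} into algebraic language using the standard dictionary between smooth vector fields and derivations of $C^\infty(X)$: a field $v$ corresponds to $\mathcal{L}_v \in \mathcal{Der}(X)$, the Lyapunov condition $df(v) > 0$ becomes $\mathcal{L}_v(f) \in \mathcal{C}^+(X)$, and the kernel $\ker(\mathcal{L}_v)$ is identified with the algebra $C^\infty(\mathcal{T}(v))$ of smooth functions on $X$ that are constant along $v$-trajectories (as recalled in the footnote of the excerpt). With this dictionary in place, essentially every ingredient is already supplied.

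First I would define $\mathcal{Der}^\odot(X)$ to be the image of the open nonempty subset $\mathcal{D}(X) \subset \mathcal{V}^\ddagger(X)$ provided by Theorem \ref{th6.11} under the $C^\infty$-continuous bijection $v \mapsto \mathcal{L}_v$; openness transfers because this correspondence is a homeomorphism between the spaces of smooth sections and of derivations. The first bullet is then immediate: every $v \in \mathcal{D}(X)$ is traversing, so by the characterization recalled at the start of Section 1 there is a smooth Lyapunov function $f$ with $df(v) > 0$, i.e., $\mathcal{L}_v(f) \in \mathcal{C}^+(X)$.

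For the second bullet I would take $\Gamma^\partial$ to be the restriction of the origami map $\Gamma : X \to \mathcal{T}(v)$ of Theorem \ref{th6.11} to the ball $D^n = \partial_1^+X(v) \subset \partial X$, and define $(\Gamma^\partial)^*$ as literal pullback, which on $\ker(\mathcal{L}_v) = C^\infty(\mathcal{T}(v)) \subset C^\infty(X)$ is nothing but restriction $\phi \mapsto \phi|_{D^n}$. Injectivity of this algebra map follows because, for $v \in \mathcal{D}(X)$, every non-singleton trajectory exits through $\partial_1^+X$ and every relevant singleton lies in $D^n$ as well, so $\Gamma^\partial$ is surjective; hence a function constant along trajectories that vanishes on $D^n$ must vanish on all of $\mathcal{T}(v)$. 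For the maximal-ideal claim I would invoke the argument already spelled out in the excerpt: since $\mathcal{T}(v)$ is compact, any nowhere-vanishing $h \in C^\infty(\mathcal{T}(v))$ is invertible in $C^\infty(\mathcal{T}(v))$, so every maximal ideal is an evaluation ideal $\mathsf{m}_\gamma$ at some trajectory $\gamma$. By the first bullet of Theorem \ref{th6.11}, $(\Gamma^\partial)^{-1}(\gamma) = \{x_1, \dots, x_k\}$ with $k \leq n+1$, and the image $(\Gamma^\partial)^*(\mathsf{m}_\gamma)$ then lies inside the intersection $\bigcap_{i=1}^{k} \mathsf{m}_{x_i}$ of evaluation ideals at these points in $C^\infty(D^n)$.

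Because almost every step is imported directly from Theorem \ref{th6.11} and from the algebraic remarks preceding the statement, I do not expect any serious obstacle. The one point that deserves care is the identification $\ker(\mathcal{L}_v) = C^\infty(\mathcal{T}(v))$ as a literal subalgebra of $C^\infty(X)$, so that pullback along the inclusion $D^n \hookrightarrow X$ really lands in $C^\infty(D^n)$ as an algebra homomorphism; this is essentially a matter of unpacking the definition of $C^\infty(\mathcal{T}(v))$ and observing that the restriction of any trajectory-constant smooth function on $X$ to the smooth ball $D^n = \partial_1^+X(v)$ is itself a smooth function on $D^n$. Once this bookkeeping is in place, the corollary is simply Theorem \ref{th6.11} read through the kernel-of-$\mathcal{L}_v$ lens.
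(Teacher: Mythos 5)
Your proposal is correct and follows essentially the same route as the paper: the paper offers no separate proof of this corollary but derives it from exactly the "previous considerations" you reproduce --- the identification $\ker(\mathcal L_v)=C^\infty(\mathcal T(v))$, the restriction monomorphism induced by $\Gamma^\d$ on the ball $\d_1^+X(v)\approx D^n$, the compactness argument classifying maximal ideals as evaluation ideals $\mathsf m_\g$, the $(n+1)$-bound on fiber cardinality from Theorem \ref{th6.11}, and openness via Theorem 3.5 of \cite{K2}. If anything, your phrasing that $(\Gamma^\d)^\ast(\mathsf m_\g)$ \emph{lies inside} $\bigcap_i\mathsf m_{x_i}$ is the more careful reading of the claim the paper states as an equality.
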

\smallskip


We would like to learn the answer to the following question: 

\begin{question}\label{q6.3} Let $X$ be a compact connected smooth manifold with boundary. Let $v$ be a traversing and boundary generic vector field on $X$. Describe the image under the restriction map, induced by the inclusion $\d X \subset X$, of the algebra $\mathsf{ker}(\mathcal L_v)$ in the algebra $C^\infty(\d X)$. \hfill $\diamondsuit$.
\end{question}

\section{A glimpse of holography}

We devote this section to the fundamental phenomenon of the \emph{holography} of traversing flows. Crudely, we are concerned with the ability to reconstruct the manifold $X$ and the traversing flow $v$ (rather, the 1-dimensional oriented foliation $\mathcal F(v)$, generated by $v$) on it in terms of some data, generated by the flow on the boundary $\d X$. This kind of problem is in the focus of an active research in Differential Geometry, where it is known under the name of \emph{geodesic inverse scattering problem} \cite{BCG}, \cite{Cr}, \cite{Cr1}, \cite{CEK}, \cite{SU}-\cite{SU2}, \cite{SUV}, \cite{SUV1}.

The main result of this section, Theorem \ref{th4.1}, describes some boundary data, sufficient for a reconstruction of the pair $(X, \mathcal F(v))$, up to a homeomorphism. The reader interested in further developments of these ideas may glance at the paper \cite{K4},  \cite{K5}, and the forthcoming book \cite{K6}.
\smallskip

First, we introduce one basic construction (see Figure 2) which will be very useful throughout our investigations.

\begin{figure}[ht]\label{fig2}
\centerline{\includegraphics[height=1.8in,width=2.9in]{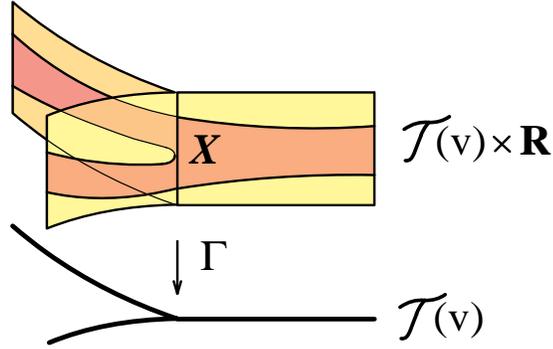}}
\bigskip
\caption{\small{The embedding $\a(f, v)$ of $X$ into the product $\mathcal T(v) \times \R$.}} 
\end{figure}

\begin{definition}\label{def3.1}
We say that a function $h: \mathcal T(v) \to \R$ is \emph{smooth}, if its pull-back $\Gamma^\ast h: X \to \R$, under the obvious map $\Gamma: X \to \mathcal T(v)$, is smooth. \hfill  $\diamondsuit$
\end{definition}

\begin{lemma}\label{lem6.17} Consider a traversing vector field $v$ on a compact smooth connected manifold $X$ with boundary and a Lyapunov function $f: X \to \R$, $df(v) > 0$. \smallskip

Any such pair $(v, f)$ generates an embedding $\a(v, f): X \subset \mathcal T(v) \times \R$, where $\mathcal T(v)$ denotes the trajectory space.\smallskip

For any smooth map $\b: \mathcal T(v) \to \R^N$, the composite map $$A(v, f): X \stackrel{\mathcal \a}{\longrightarrow} \mathcal T(v) \times \R \; \stackrel{\b \times id}{\longrightarrow}\; R^N \times \R$$ is smooth.
\smallskip

Any two embeddings, $\a(f_1, v)$ and $\a(f_2, v)$, are isotopic through homeomorphisms, provided that $df_1(v) > 0, df_2(v) > 0$.
\end{lemma}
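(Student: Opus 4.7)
The plan is to define the map explicitly by
$$\alpha(v,f)(x) = (\Gamma(x),\, f(x)) \in \mathcal{T}(v) \times \R,$$
where $\Gamma: X \to \mathcal{T}(v)$ is the tautological projection sending a point to its trajectory. To establish that this is an embedding I would first check injectivity: the Lyapunov condition $df(v) > 0$ forces $f$ to be strictly increasing along each trajectory $\g$, so $f|_\g: \g \to \R$ is injective. If $\alpha(x_1) = \alpha(x_2)$ then $\Gamma(x_1) = \Gamma(x_2)$ places $x_1, x_2$ on a common trajectory $\g$, and $f(x_1) = f(x_2)$ then forces $x_1 = x_2$. Continuity of $\alpha$ is inherited componentwise from $\Gamma$ (continuous as the quotient map) and from $f$. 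Since $X$ is compact and $\mathcal{T}(v)\times \R$ is Hausdorff---the trajectory space of a traversing flow being compact Hausdorff, as recorded in \cite{K1}, \cite{K3}---any continuous injection from $X$ is automatically a closed map, hence a topological embedding.

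The second assertion is immediate from Definition \ref{def3.1}: smoothness of $\b: \mathcal{T}(v) \to \R^N$ means precisely that $\Gamma^*\b = \b \circ \Gamma$ is smooth on $X$, and combining this with smoothness of $f$ shows that $A(v,f) = (\b \circ \Gamma) \times f$ is smooth as a map $X \to \R^N \times \R$.

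For the isotopy assertion I would linearly interpolate: set $f_t = (1-t) f_1 + t f_2$ for $t \in [0,1]$. The Lyapunov inequality is a convex condition, so
$$df_t(v) = (1-t)\, df_1(v) + t\, df_2(v) > 0,$$
meaning each $f_t$ is itself a Lyapunov function for $v$. By the first part of the proof, each $\alpha(v,f_t)$ is an embedding, and the joint map $H: X \times [0,1] \to \mathcal{T}(v) \times \R$ defined by $H(x,t) = (\Gamma(x), f_t(x))$ is jointly continuous in $(x,t)$. This is exactly an isotopy through homeomorphisms of $X$ onto their respective images.

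I expect the only genuinely nontrivial point to be the Hausdorffness of $\mathcal{T}(v)$, since that is what promotes the continuous injection $\alpha$ to a topological embedding via the compactness of $X$. Rather than reprove it, I would simply invoke the corresponding result from \cite{K1}, \cite{K3} (where compactness and Hausdorffness of $\mathcal{T}(v)$ for traversing $v$ are established using a global Lyapunov function and a transverse cross-section argument). Everything else reduces to verifying strict monotonicity of a Lyapunov function along trajectories and convexity of the Lyapunov condition.
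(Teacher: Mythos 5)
Your proposal is correct and follows essentially the same route as the paper: the explicit formula $\a(v,f)(x) = (\g_x, f(x))$, injectivity from strict monotonicity of $f$ along trajectories, smoothness of $A(v,f)$ directly from Definition~\ref{def3.1}, and the isotopy via the convexity of the cone of Lyapunov functions (your linear interpolation is exactly a path in that convex cone). Your explicit appeal to compactness of $X$ and Hausdorffness of $\mathcal T(v)$ to upgrade the continuous injection to an embedding is a detail the paper leaves implicit, and it is the right thing to cite.
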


\begin{proof} Since $f$ is strictly increasing along the $v$-trajectories, any point $x \in X$ is determined by the $v$-trajectory $\g_x$ through $x$ and the value $f(x)$.  Therefore, $x$ is determined by  the point $\g_x \times f(x) \in \mathcal T(v) \times \R$. By the definition of topology in $\mathcal T(v)$, the correspondence $\a(f, v): x \rightarrow \g_x \times f(x)$ is a continuous map.\smallskip

In fact, $\a(f, v)$ is a smooth map in the spirit of Definition \ref{def3.1}: more accurately,  for any map $\b: \mathcal T(v) \to \R^N$, given by $N$ smooth functions on $\mathcal T(v)$, the composite map $A(v, f): X \to \R^N \times \R$ is smooth.  The verification of this fact is on the level of definitions. 
\smallskip

For a fixed $v$, the condition $df(v) > 0$ defines an open convex cone $\mathcal C^+(v)$ in the space $C^\infty(X)$. Thus, $f_1$ and $f_2$ can be linked by a path in $\mathcal C^+(v)$, which results in  $\a(f_1, v)$ and $\a(f_2, v)$ being homotopic through homeomorphisms. 
\hfill 
\end{proof}

\noindent {\bf Remark 3.1.}
 By examining  Figure 2, we observe an interesting phenomenon: the embedding $\a: X \subset \mathcal T(v) \times \R$ does not extend to an embedding of a larger manifold $\hat X \supset X$, where $\hat X \setminus X \approx \d_1X \times [0, \e)$. In other words, $\a(\d_1X)$ has no outward ``normal field" in the ambient $\mathcal T(v) \times \R$. In that sense, $\a(\d_1X)$ is \emph{rigid} in $\mathcal T(v) \times \R$! \hfill $\diamondsuit$ 
\smallskip

\begin{corollary}\label{cor3.2} Any compact connected smooth $(n+1)$-manifold $X$ with boundary admits an embedding $\a: X \to \mathcal T \times \R$, were $\mathcal T$ is a $CW$-complex that is the image of the $n$-ball $D^n$ under a continuous map, whose fibers are of the cardinality $n+1$  at most. Moreover, $\a$ is a  homotopy equivalence.\end{corollary}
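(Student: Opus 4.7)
The plan is to combine the Origami Theorem \ref{th6.11} with Lemma \ref{lem6.17} in a direct way. First, apply Theorem \ref{th6.11} to obtain a traversally generic field $v \in \mathcal D(X)$ on $X$ together with the continuous cellular map $\Gamma_\d: D^n \to \mathcal T(v)$, whose fibers have cardinality at most $n+1$; here $D^n$ is identified with $\d_1^+X(v) \subset \d_1^\star X$. Set $\mathcal T := \mathcal T(v)$: this inherits a CW-complex structure from the stratification $\{\mathcal T(v, \omega_{\succeq_\bullet})\}_\omega$ and, by Theorem \ref{th6.11}, is realized as the image of $D^n$ under an $(n+1)$-to-$1$ cellular map.

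Since $v$ is traversing, it admits a smooth Lyapunov function $f: X \to \R$ with $df(v) > 0$. Apply Lemma \ref{lem6.17} to the pair $(v, f)$ to produce the embedding
$$\a := \a(v, f): X \hookrightarrow \mathcal T \times \R.$$
This already delivers the first two assertions: $\mathcal T$ is the image of $D^n$ under a continuous map whose fibers have cardinality at most $n+1$, and $\a$ is an embedding of $X$ into $\mathcal T \times \R$.

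To verify that $\a$ is a homotopy equivalence, I would observe that the composition of $\a$ with the projection $\mathrm{pr}_1: \mathcal T(v) \times \R \to \mathcal T(v)$ is precisely the trajectory quotient map $\Gamma: X \to \mathcal T(v)$, sending a point $x$ to the trajectory $\g_x$ through it. By Theorem 5.1 of \cite{K3} (as recalled in Section 1 and Remark 3.1), $\Gamma$ is a weak homotopy equivalence; since $X$ is a smooth manifold with boundary and $\mathcal T(v)$ carries the CW-structure from the Origami Theorem, Whitehead's theorem upgrades this to a genuine homotopy equivalence. The projection $\mathrm{pr}_1$ is a homotopy equivalence because $\R$ is contractible, so the two-out-of-three property for homotopy equivalences forces $\a$ itself to be a homotopy equivalence.

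The main point requiring care is the upgrade of the weak homotopy equivalence $\Gamma: X \to \mathcal T(v)$ to a genuine homotopy equivalence, which rests on the CW-complex structure of $\mathcal T(v)$ supplied by Theorem \ref{th6.11} together with the cellularity of $\Gamma$. Once these are in place, the statement follows as an essentially formal consequence of the Origami Theorem and Lemma \ref{lem6.17}, with no further geometric input required.
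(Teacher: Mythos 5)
Your proposal is correct and follows essentially the same route as the paper: combine Theorem \ref{th6.11} with Lemma \ref{lem6.17} for the embedding and the origami description of $\mathcal T(v)$, then use the factorization $\Gamma = \mathrm{pr}_1 \circ \a$ together with the fact that $\Gamma$ is a homotopy equivalence (Theorem 5.1 of \cite{K3}) to conclude. Your extra care in upgrading the weak homotopy equivalence to a genuine one via Whitehead's theorem is a reasonable elaboration of a step the paper states without comment.
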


\begin{proof} We combine the Origami Theorem \ref{th6.11} with Lemma \ref{lem6.17} to validate the first claim of the lemma. 

Since $\Gamma = p \circ \a$, where $p: \mathcal T \times \R \to \mathcal T$ is the obvious projection, and $\Gamma$ is a homotopy equivalence by Theorem 5.1 from \cite{K3}, so is the map $\a$.
\hfill
\end{proof}

\begin{corollary}\label{cor4.2} Let $v$ be a traversing vector field on a compact smooth connected manifold $X$ with boundary and $f: X \to \R$ its Lyapunov function. Let $X^\circ$ denote the interior of $X$. \smallskip 

Then the embedding $$\a(f, v):\, \d_1X \longrightarrow \big(\mathcal T(v) \times [0, 1]\big) \setminus \a(f, v)(X^\circ)$$ is a homology equivalence. As a result, the space $$\big(\mathcal T(v) \times [0, 1]\big) \setminus \a(f, v)(X^\circ)$$ is a Poincar\'{e} complex of the formal dimension $\dim(X) - 1$.
\end{corollary}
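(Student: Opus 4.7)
The plan is to prove the corollary by constructing an explicit strong deformation retraction of the space $C := (\mathcal T(v) \times [0,1]) \setminus \a(f, v)(X^\circ)$ onto $\a(f, v)(\d_1 X)$; this immediately yields the homotopy equivalence (and \emph{a fortiori} the homology equivalence) claimed in the first sentence. The Poincar\'e complex assertion will then fall out because $\d_1 X$ is a closed smooth $n$-manifold with $n = \dim X - 1$, hence a Poincar\'e duality complex of that formal dimension (with its orientation local system if $X$ is non-orientable), and the Poincar\'e duality property is a homotopy invariant.

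After rescaling $f$ so that $f(X) \subseteq [0, 1]$ (this preserves the Lyapunov condition $df(v) > 0$), the next step is to introduce the two ``floor and ceiling'' functions on the trajectory space:
$$\phi^-(\g) \;:=\; \min_{x \in \g} f(x), \qquad \phi^+(\g) \;:=\; \max_{x \in \g} f(x).$$
Since $f$ is strictly monotone along each $v$-trajectory, $\phi^-(\g)$ is attained at the entry point of $\g$ and $\phi^+(\g)$ at the exit point (they coincide precisely on singleton trajectories). Both functions are trajectory-constant, so they descend to $\phi^\pm : \mathcal T(v) \to [0,1]$; by the definition of the quotient topology on $\mathcal T(v)$ (Definition \ref{def3.1} and the text preceding it) continuity of $\phi^\pm$ reduces to continuity on $X$ of the entry/exit maps of the traversing flow. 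With this in hand, the image of $\a(f, v)$ is the ``band''
$$\a(X) \;=\; \bigl\{(\g, t) \in \mathcal T(v) \times [0, 1] \,:\, \phi^-(\g) \leq t \leq \phi^+(\g)\bigr\},$$
sandwiched between the graphs of $\phi^\pm$, and $\a(\d_1 X) \subset \a(X)$ consists of the graphs together with the isolated ``interior-tangency'' points $\a(\g \cap \d_1 X \setminus \{\mathsf{entry}, \mathsf{exit}\})$.

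The retraction $r: C \to \a(\d_1 X)$ is then defined by the fiberwise clamping
$$r(\g, t) \;=\; \bigl(\g,\; \min\{\phi^+(\g),\, \max\{t, \phi^-(\g)\}\}\bigr).$$
Geometrically, a point below the band ($t \leq \phi^-(\g)$) is pushed up to the entry $(\g, \phi^-(\g))$; a point above is pushed down to the exit; and any point $(\g,t) \in C$ with $\phi^-(\g) \leq t \leq \phi^+(\g)$ automatically lies in $\a(\d_1 X)$ (it is the $\a$-image of an entry, an exit, or an interior tangency of $\g$), so $r$ fixes it. The linear homotopy $H_s(\g, t) = (\g,\, (1 - s)t + s\, r_2(\g, t))$, with $r_2$ the second coordinate of $r$, moves each point monotonically within its fiber toward $r(\g, t)$. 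A one-line case check shows that the track of $H_s$ never meets $\a(X^\circ)$: on ``below'' fibers it stays inside $[t, \phi^-(\g)]$, symmetrically on ``above'' fibers, and on interior tangencies it is stationary. Consequently $H_s$ is a strong deformation retraction of $C$ onto $\a(\d_1 X)$, so $\a(f, v)|_{\d_1 X}$ is a homotopy equivalence to $C$.

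The one delicate step is the continuity of $\phi^\pm$ on $\mathcal T(v)$, or equivalently the continuity of the first-hit maps of the traversing flow on $X$. This is where compactness of $X$ and the strict monotonicity of the Lyapunov function $f$ along $v$ do the work: together they force the backward and forward ``times to $\d_1 X$'' to depend continuously on the base point, even at trajectories of higher tangency order, and the conclusion then transports to $\mathcal T(v)$ through the quotient topology. This is in the spirit of the arguments of Section~2 and of \cite{K1}. Once continuity is granted, the retraction $r$ and the homotopy $H_s$ are automatically continuous, and the Poincar\'e duality claim for $C$ follows by transporting the Poincar\'e structure on the closed manifold $\d_1 X$ across the homotopy equivalence $\a(f, v) : \d_1 X \to C$.
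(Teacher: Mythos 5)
Your argument takes a genuinely different route from the paper, and unfortunately the route has a gap at exactly the step you flag as ``delicate''. The paper never constructs a retraction: it compares the homology long exact sequences of the pairs $(X, \d_1X)$ and $\big(\mathcal T(v)\times[0,1],\, (\mathcal T(v)\times[0,1])\setminus\a(X^\circ)\big)$, identifies the relative groups by excision with $H_\ast(X,\d_1X)$, invokes the homology equivalence $\Gamma: X\to\mathcal T(v)$ on absolute groups, and finishes with the Five Lemma. That purely homological argument delivers precisely the stated conclusion (a homology equivalence) without ever touching the pointwise structure of $\mathcal T(v)$.

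The gap in your construction is that the exit-value function $\phi^+$ (and symmetrically $\phi^-$) is \emph{not} continuous on $\mathcal T(v)$ for a general traversing field; neither compactness of $X$ nor strict monotonicity of $f$ along trajectories repairs this. The failure occurs at every trajectory $\g_0$ whose divisor $D_{\g_0}$ has a point $a$ of even multiplicity in its interior (combinatorial type $(1,2,1)$, say), i.e.\ at the concave tangencies in $\d_2^-X$ that the paper's stratification machinery exists to handle. In the local model (\ref{eq2.4}) with $\wp=(u-1)\big[(u-2)^2+x\big](u-3)$, the nearby lines with $x<0$ meet $X$ in \emph{two} trajectories, $[1,\,2-\sqrt{-x}\,]$ and $[2+\sqrt{-x},\,3]$. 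The first of these converges to $\g_0$ in the quotient topology of $\mathcal T(v)$ (any saturated open set containing $\g_0$ contains a neighborhood of the entry point, hence the whole short trajectory), yet its exit value tends to $f(a)$ rather than to $\phi^+(\g_0)=f(q)$, where $q$ is the true exit of $\g_0$. Hence $\phi^+$ jumps at $\g_0$, and your clamping map $r$ is discontinuous at $(\g_0,t)$ for $t$ above the band: a concrete instance is the planar annulus $\{1\le|z|\le2\}$ with the constant vertical field, where the lower chords at $x\to 1^-$ exit on the inner circle while the limiting tangent chord exits on the outer one. So the deformation retraction as written does not exist; making a retraction work would require a careful treatment of these tangency strata (and would in any case be proving a homotopy-equivalence statement stronger than the corollary asserts). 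The clean fix is the paper's excision--Five Lemma argument. Your reduction of the Poincar\'e complex claim to homotopy (or homology) invariance of duality for the closed $n$-manifold $\d_1X$ is fine and matches the paper's final step.
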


\begin{proof} Put $\a =_{\mathsf{def}} \a(f, v)$. Let us compare the homology long exact sequences of the two pairs: $$X \supset \d_1X\;\; \text{and} \;\; \mathcal T(v) \times [0, 1] \supset \, (\mathcal T(v) \times [0, 1]) \setminus \a(X^\circ).$$ They are connected by the vertical homomorphisms that are induced by $\a$. Using the excision property, $$\a_\ast: H_\ast(X, \d_1X) \to H_\ast\big(\mathcal T(v) \times [0, 1], \; (\mathcal T(v) \times [0, 1]) \setminus \a(X^\circ)\big)$$ are isomorphisms. On the other hand, since by Theorem \ref{th6.8},  $\Gamma: X \to \mathcal T(v)$ is a homology equivalence, $\a_\ast: H_\ast(X) \to H_\ast(\mathcal T(v) \times [0, 1])$ are isomorphisms. Therefore by the Five Lemma, $$\a_\ast: H_\ast(\d_1X) \to H_\ast((\mathcal T(v) \times [0, 1]) \setminus \a(X^\circ))$$ must be isomorphisms as well. Since $\d_1X$ is a closed $n$-manifold, it is a Poincar\'{e} complex of formal dimension $n$, and thus so is the space $$(\mathcal T(v) \times [0, 1]) \setminus \a(f, v)(X^\circ).$$
\hfill 
\end{proof}


We denote by $\mathcal F(v)$ the oriented 1-dimensional foliation on $X$, produced by the  $\hat v$-trajectories. 

\begin{definition}\label{def4.2} Let $v$ be a traversing vector field on $X$. Given two points $x, y \in \d_1X$, we write $y \succ_v x$ if both points belong to the same $v$-trajectory $\g \subset X$ and, moving from $x$ in the $v$-direction along $\g$, we can reach $y$. 

The relation $y \succ_v x$ introduces a \emph{partial order} $\succ_v$ in the set $\d_1X$. 
\hfill $\diamondsuit$
\end{definition}

Adding an extra ingredient to the partial order $\succ_v$ (equivalently, to the origami construction), allows for a reconstruction of the topological type of the pair $(X, \mathcal F(v))$ from the flow-generated information, \emph{residing on the boundary} $\d X$. The new ingredient is the restriction of the Lyapunov function $f: X \to \R$ to the boundary.

\begin{theorem}\label{th4.1}{\bf (Topological Holography of Traversing Flows)}
\smallskip 

\noindent Let a $v$ be a traversing vector field $v$ on a compact connected smooth manifold $X$ with boundary, and let $f: X \to \R$ be its Lyapunov function. \smallskip

Then the partial order $\succ_v$ on $\d_1X$, 
together with the restriction $f^\d: \d X \to \R$ of a Lyapunov function $f: X \to \R$, allows for a reconstruction of the topological type of the pair $(X, \mathcal F(v))$.
 \end{theorem}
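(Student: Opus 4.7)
The plan is to reconstruct $X$ as an explicit subspace of $\mathcal T(v) \times \R$, using the embedding $\alpha(f,v)$ from Lemma \ref{lem6.17}, and to read this subspace off directly from the boundary data $(\partial_1 X, \succ_v, f^\partial)$.

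First I would reconstruct $\mathcal T(v)$ as a topological space from $\succ_v$ alone. The equivalence relation $\sim$ on $\partial_1 X$ generated by $\succ_v$ has as classes precisely the finite totally ordered chains $x_1 \prec_v \cdots \prec_v x_k$ (with $k=1$ corresponding to singleton trajectories), which are exactly the boundary traces of the $v$-trajectories. Let $\widetilde{\mathcal T} := \partial_1 X / \sim$ carry the quotient topology. The restriction $\Gamma|_{\partial_1 X}: \partial_1 X \to \mathcal T(v)$ is a continuous surjection whose fibres coincide with the $\sim$-classes; since $\partial_1 X$ is compact and $\mathcal T(v)$ is Hausdorff (it is a $CW$-complex by Theorem \ref{th6.11}), this map is closed, hence a topological quotient, and $\widetilde{\mathcal T}$ is canonically homeomorphic to $\mathcal T(v)$.

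Next I would use $f^\partial$ to extract, for each trajectory $\gamma$ with chain $x_1(\gamma) \prec_v \cdots \prec_v x_{k(\gamma)}(\gamma)$, the endpoint values $m(\gamma) := f^\partial(x_1(\gamma))$ and $M(\gamma) := f^\partial(x_{k(\gamma)}(\gamma))$, and define
\[
Y := \bigl\{(\gamma, t) \in \widetilde{\mathcal T} \times \R \;:\; m(\gamma) \leq t \leq M(\gamma)\bigr\},
\]
equipped with the oriented $1$-foliation $\mathcal G$ whose leaves are the vertical slices $\{\gamma\} \times [m(\gamma), M(\gamma)]$, oriented by increasing $\R$. By Lemma \ref{lem6.17}, the map $\alpha(f,v): X \to \mathcal T(v) \times \R$, $x \mapsto (\gamma_x, f(x))$, is a continuous embedding; and since $f$ strictly increases along each trajectory from $m(\gamma_x)$ to $M(\gamma_x)$, its image is exactly $Y$. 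As $X$ is compact and $\mathcal T(v)\times\R$ is Hausdorff, $\alpha(f,v)$ is a homeomorphism $X \cong Y$ carrying $\mathcal F(v)$ onto $\mathcal G$. Since $(Y,\mathcal G)$ was built from $(\partial_1 X, \succ_v, f^\partial)$ alone, the topological type of $(X, \mathcal F(v))$ is reconstructed.

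The hard part will be justifying that the quotient topology on $\partial_1 X/\sim$ genuinely agrees with the intrinsic topology of $\mathcal T(v)$; granting Hausdorffness of $\mathcal T(v)$, the compactness argument above identifies the two. A subsidiary technicality is the treatment of singleton trajectories, where $k(\gamma)=1$ and $m(\gamma) = M(\gamma)$, but the degenerate slice $\{\gamma\}\times\{m(\gamma)\}$ correctly encodes such a trajectory as a single point, so no separate handling is required.
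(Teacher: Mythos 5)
Your proposal is correct and follows essentially the same route as the paper: reconstruct $\mathcal T(v)$ as the quotient $\d_1X/\sim_v$ determined by $\succ_v$, then use $f^\d$ and the embedding of Lemma \ref{lem6.17} to recover $X$ inside $\mathcal T(v)\times\R$ together with the vertical foliation $\mathcal G$. Your explicit description of the image as $\{(\g,t)\,:\, m(\g)\le t\le M(\g)\}$ is in fact a cleaner final step than the paper's, which instead asserts that $\a(v,f^\d)(\d X)$ separates $\mathcal T(v)\times\R$ into two domains and identifies $\a(v,f)(X)$ with the compact one; and your attention to why the quotient topology on $\d_1X/\sim_v$ agrees with the intrinsic topology of $\mathcal T(v)$ addresses a point the paper passes over silently.
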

 
 \begin{proof} The validation of the theorem is based on Lemma \ref{lem6.17}.
 \smallskip
 
First, we observe that the partial order $\succ_v$ allows for a reconstruction of the trajectory space $\mathcal T(v)$ and the quotient map $\Gamma^\d: \d_1X \to \mathcal T(v)$. Indeed, we declare two points $x, y \in \d_1X$ \emph{equivalent} if $y \succ_v x$ or $x \succ_v y$. This equivalence relation $\sim_v$  produces the quotient map $\Gamma^\d: \d_1X \to (\d_1X)\big /\sim_v$, whose target may be identified with the space  $\mathcal T(v)$ since, for a traversing $v$, every trajectory $\g$ is determined by its intersection $\g\cap \d_1X$.   \smallskip 
 
As in Lemma \ref{lem6.17}, using $f^\d$, we construct an embedding $$\a^\d = \a(v, f^\d)|: \d X \subset \mathcal T(v) \times \R.$$ Then $\a^\d(\d X)$ divides $\mathcal T(v) \times \R$ into two domains, one of which is compact. That compact domain $\mathcal X$ is $\a(v, f)(X)$. Since $\a(v, f): X \to \mathcal X$ is a homeomorphism, we managed to reconstruct the topological type of $X$ from the boundary data $(\succ_v, f^\d)$ (in the end, from $(\Gamma^\d, f^\d)$). \smallskip

Evidently, $\mathcal X$ is equipped with a 1-dimensional foliation $\mathcal G$, generated by the product structure in the ambient $\mathcal T(v) \times \R$. 

By its construction, the homeomorphism $\a(v, f)$ maps each leaf of $\mathcal F(v)$ to a leaf of $\mathcal G$. Thanks to $\a(v, f)$, the pair $(\mathcal X, \mathcal G)$, which we have recovered from the boundary data $(\succ_v, f^\d)$, has the same topological type as the original pair $(X, \mathcal F(v))$. 

Note that, for a given pair $(\succ_v, f^\d)$, the homeomorphism $\a(v, f)$ is far from being unique. Even, for a fixed pair $(X, v)$, we may vary the Lyapunov function $f$, while keeping $f^\d$ fixed. However, the space $\mathsf{Lyap}(v, f^\d)$ of such Lyapunov functions $f$ is convex, and thus contractible.  Therefore, for any two $f_1, f_2 \in \mathsf{Lyap}(v, f^\d)$,\, the embeddings $\a(v, f_1)$ and $\a(v, f_2)$ are homotopic through homeomorphisms that map $\mathcal F(v)$ to $\mathcal G$.
\hfill
 \end{proof}

\begin{remark}\label{rem4.1} The question whether the data $(\succ_v, f^\d)$ are sufficient for a reconstruction of the \emph{differentiable} or even \emph{smooth} topological type of the pair $(X, \mathcal F(v))$ seems to be much more delicate. We suspect that the positive answer to it will depend on our ability to answer Question \ref{q6.3}. \smallskip

Under certain assumptions about $v$ (such as some restrictions on the combinatorial types of $v$-trajectories), the answer is positive \cite{K4}. 
\hfill $\diamondsuit$
\end{remark}

\begin{corollary}\label{cor4.3} Let a traversally generic vector field $v$ on $X$ be such that $\d_1^+X(v) \approx D^n$ \footnote{by Theorem \ref{th4.1}, such vector field exists.}. Then the origami map $\Gamma^\d: D^n \to \mathcal T(v)$, together with the restriction $f^\d_+: D^n \to \R$ of the Lyapunov function $f: X \to \R$, allow for a reconstruction of the topological type of the pair $(X, \mathcal F(v))$.\smallskip 
\end{corollary}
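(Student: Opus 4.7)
The plan is to reconstruct, from $(\Gamma^\d, f^\d_+)$, an embedded compact region $\tilde X \subset \mathcal T(v) \times \R$ together with its vertical $1$-foliation $\tilde{\mathcal F}$, homeomorphic as a foliated pair to the image $\a(v, f)(X)$ of the embedding from Lemma \ref{lem6.17}. By that lemma and the argument in the proof of Theorem \ref{th4.1}, producing such $(\tilde X, \tilde{\mathcal F})$ recovers $(X, \mathcal F(v))$ up to homeomorphism.

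First, I reconstruct $\mathcal T(v)$ as the quotient $D^n / {\sim}$, where $x \sim x'$ iff $\Gamma^\d(x) = \Gamma^\d(x')$. The map $\a_+ : D^n \to \mathcal T(v) \times \R$ sending $x \mapsto (\Gamma^\d(x), f^\d_+(x))$ is injective, since two distinct points in the same $\Gamma^\d$-fiber lie on the same $v$-trajectory and hence have distinct $f^\d_+$-values by strict $v$-monotonicity of $f$; being continuous on a compact domain, $\a_+$ is a topological embedding whose image coincides with $\a(v, f)(\d_1^+X)$. Define $f_+(\g) := \max\{f^\d_+(x) : x \in (\Gamma^\d)^{-1}(\g)\}$, the exit height of each trajectory.

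Next, I extract the closed \emph{singleton locus} $\mathcal T_{\mathsf{sing}}(v) \subset \mathcal T(v)$. The origami filtration $Z^0 \subset Z^1 \subset D^2 \subset \ldots \subset D^n$ of Theorem \ref{th6.11} places each $x \in D^n$ in a unique stratum $\d_j^+X^\circ$, recording its multiplicity $m(x) = j$; a trajectory $\g$ is a singleton iff its $\Gamma^\d$-fiber consists of a single point of even multiplicity. I pick any continuous $f_- : \mathcal T(v) \to \R$ with $f_- = f_+$ on $\mathcal T_{\mathsf{sing}}(v)$ and $f_- < f_+$ elsewhere (easily done since $\mathcal T_{\mathsf{sing}}(v)$ is closed in the $CW$-complex $\mathcal T(v)$), and set
$$\tilde X := \{(\g, t) \in \mathcal T(v) \times \R : f_-(\g) \leq t \leq f_+(\g)\},$$
equipped with the vertical $1$-foliation $\tilde{\mathcal F} = \{\{\g\} \times [f_-(\g), f_+(\g)]\}_\g$.

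Finally, observe that $\a(v, f)(X)$ admits the same fiber description with $f_-^{\mathsf{true}}(\g) := \min_{x \in \g} f(x)$ in place of $f_-$; the function $f_-^{\mathsf{true}}$ is continuous on $\mathcal T(v)$ (entry points vary continuously along and across combinatorial strata) and satisfies $f_-^{\mathsf{true}} = f_+$ precisely on $\mathcal T_{\mathsf{sing}}(v)$. The fiber-preserving affine rescaling mapping $[f_-^{\mathsf{true}}(\g), f_+(\g)]$ linearly onto $[f_-(\g), f_+(\g)]$, fixing the endpoint $f_+(\g)$, is a homeomorphism off $\mathcal T_{\mathsf{sing}}(v)$; it extends continuously to $\mathcal T_{\mathsf{sing}}(v)$ by a squeeze, since both intervals collapse to the common point $f_+(\g)$ there. \emph{The main obstacle} is detecting $\mathcal T_{\mathsf{sing}}(v)$ from the combinatorial data---which relies on the filtration encoding the $\d_j^+X$-multiplicities---and verifying that both $f_-^{\mathsf{true}}$ and the fiber-wise rescaling are continuous across transitions between the combinatorial strata $\{\mathcal T(v, \omega)\}$ of $\mathcal T(v)$.
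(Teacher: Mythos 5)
Your route is genuinely different from the paper's: the paper extends $f^\d_+$ to a function $f^\d$ on all of $\d_1X$ compatible with $\succ_v$, embeds the whole boundary via $\a(v,f^\d)$ as in Lemma \ref{lem6.17}, and then takes the compact complementary domain exactly as in the proof of Theorem \ref{th4.1}; you instead try to build $\a(v,f)(X)$ directly as the region between a ``ceiling'' read off from $f^\d_+$ and an arbitrarily chosen ``floor,'' corrected by a fiberwise affine rescaling. Several of your preliminary steps are sound: the quotient $D^n/\sim$ does recover $\mathcal T(v)$; the endpoint of $\g$ at which $f$ is extremized on the $\d_1^+X$-side always lies in $D^n$ (a point of odd multiplicity $2k+1\geq 3$ lies in $\d_2X\subset\d_1^+X$), so one of the two endpoint-value functions is indeed computable from $f^\d_+$; and your criterion ``single fiber point of even multiplicity'' does characterize singletons, granted that the filtration (\ref{eq6.47}) is part of the origami data.

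The gap is in the step you flagged as the main obstacle, and it is fatal as written: the singleton locus $\mathcal T_{\mathsf{sing}}(v)$ is \emph{not} closed in $\mathcal T(v)$, and the entrance/exit height functions are \emph{not} continuous across the strata. Concretely, let $a\in\d_3X$ be a multiplicity-$3$ endpoint of a nontrivial trajectory $\g_a$ (such points exist whenever the filtration is nontrivial, i.e.\ $n\geq 2$). In the local model (\ref{eq2.4}) for $\omega$ containing the entry $3$, the closure of $\d_2^+X^\circ$ contains $\d_3X$, so there are singleton trajectories $\g_m=\{a_m\}$ with $a_m\in\d_2^+X^\circ$ and $a_m\to a$ in $X$; by continuity of $\Gamma$, $\g_m\to\g_a$ in $\mathcal T(v)$, while $\g_a\notin\mathcal T_{\mathsf{sing}}(v)$. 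Hence $\mathcal T_{\mathsf{sing}}(v)$ is not closed, and since the zero set of the continuous function $f_+-f_-$ is necessarily closed, \emph{no} continuous $f_-$ with $\{f_-=f_+\}=\mathcal T_{\mathsf{sing}}(v)$ exists. The same example shows $f_-^{\mathsf{true}}$ is discontinuous at $\g_a$: along the singleton sequence $f_-^{\mathsf{true}}(\g_m)\to f(a)=\max_{\g_a}f$, whereas $f_-^{\mathsf{true}}(\g_a)$ is the value of $f$ at the other endpoint of $\g_a$ (and symmetrically, near a multiplicity-$3$ \emph{entrance} point your ceiling $f_+$ itself jumps). So $\a(v,f)(X)$ is not the region between the graphs of a continuous floor and ceiling over $\mathcal T(v)$, and your fiberwise affine rescaling has no continuous extension across such trajectories. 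To repair the argument you would have to reconstruct the actual (semicontinuous) function $f_-^{\mathsf{true}}$, i.e.\ the heights of the entrance points on $\d_1^-X$, which is essentially what the paper's proof does by extending $f^\d_+$ to $f^\d$ on all of $\d_1X$ and falling back on the separation argument of Theorem \ref{th4.1}.
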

 
 \begin{proof}  To validate the claim of the theorem, we combine Theorems \ref{th6.11} and \ref{th4.1}. \smallskip
 
Given a traversing $v$ and a function $f^\d_+: \d_1^+X(v) \to \R$, let $\mathsf{Lyap}(v, f^\d_+)$ be the space of Lyapunov functions $f: X \to \R$ such that $f|_{\d_1^+X(v)}  = f^\d_+$. Again, $\mathsf{Lyap}(v, f^\d_+)$ is a convex contractible space. \smallskip
 
We assume that the function $f^\d_+$ is known and is generated by \emph{some} (unknown) $f \in \mathsf{Lyap}(v)$. By the properties of the Lyapunov function $f$, we may assume that $f^\d_+$ extends to a function $f^\d: \d_1X \to \R$ so that, for any $x, y \in \d_1X$, $y \succ_v x$, the inequality $f(x) < f(y)$ is valid. \smallskip  
 
By Theorem \ref{th6.11}, the image $\Gamma^\d(D^n)$ of the Origami map $\Gamma^\d$ is the trajectory space $\mathcal T(v)$, and the fibers of $\Gamma^\d$ may be identified with the $(\sim_v)$-equivalence classes of points in $D^n$. Moreover, since $v$ is traversally generic, by Theorem 5.1 from \cite{K3}, $\mathcal T(v)$ is a compact $CW$-complex.
\smallskip
 
Now, as in Lemma \ref{lem6.17}, using $f^\d$, we construct an embedding $\a(v, f^\d): \d X \subset \mathcal T(v) \times \R$. The rest of the argument is similar to the argument we used to prove Theorem \ref{th4.1}.
\hfill
\end{proof}



\begin{thebibliography}{30}

\bibitem [BCG]{BCG} Besson, G., Courtois, G., Gallaot, G., Minimal entropy and MostowÕs rigidity theorems, Ergod. Th. \& Dynam. Syst., (1996), 16, 623-649.

\bibitem [Cr]{Cr} Croke, C. Scattering Rigidity with Trapped Geogesics, arXiv:1103.5511v2 [mathDG] 21 Nov 2012. 

\bibitem [Cr1]{Cr1} Croke, C. Rigidity Theorems in Riemannian Geometry, Chapter in Geometric Methods in Inverse Problems and PDE Control, C. Croke, I. Lasiecka, G. Uhlmann, and M. Vogelius eds., IMA vol. Math Appl., 137, Springer, 2004.

\bibitem [CEK]{CEK} Croke, C., Eberlein P., Kleiner, B., Conjugacy and rigidity for nonpositively curved manifolds of higher rank, Topology 35 (1996), 273-286.

\bibitem  [GR]{GR} Gilman, D., Rolfsen, D., {\it  Manifolds and their special spines}, Contemporary Mathematics, vol. 20, 1983, 145-151. 

\bibitem [FR]{FR} Fenn, R., Rourke, C., {\it Nice spines of 3-manifolds}, Topology of Low-Dimensional Manifolds, Lecture Notes in Mathematics, no. 722, 31-36. 

\bibitem [K]{K} Katz, G., {\it Convexity of Morse Stratifications and Gradient Spines of 3-Manifolds}, JP Journal of Geometry and Topology, vol. 9, no 1 (2009), 1-119.

\bibitem [K1]{K1} Katz, G., {\it Stratified  Convexity \& Concavity of Gradient Flows on Manifolds with Boundary}, Applied Mathematics, 2014, vol. 5, 2823-2848. \underline{http://www.scirp.org/journal/am}

\bibitem[K2]{K2} Katz, G., {\it  Traversally Generic \& Versal Flows: Semi-algebraic Models of Tangency to the Boundary}, Asian J. of Math., vol. 21, No. 1 (2017), 127-168 (arXiv:1407.1345v1 [mathGT] 4 July, 2014)).

\bibitem[K3]{K3} Katz, G., {\it The Stratified Spaces of Real Polynomials \& Trajectory Spaces of Traversing Flows}, JP J. of Geometry and Topology, v. 19, No. 2 (2016), 95-160 (arXiv:1407.2984v3 [mathGT] 6 Aug 2014).

\bibitem[K4]{K4} Katz, G., {\it Causal Holography of Traversing Flows}, arXiv:1409.0588v1 [mathGT] (2 Sep 2014).

\bibitem [K5]{K5} Katz G., {\it Causal Holography in Application to the Inverse Scattering Problem}, arXiv: 1703.08874v1 [Math.GT], 27 Mar 2017. 

\bibitem [K6]{K6} Katz, G., {\it Holography and Homology of Traversing Flows}, to be published by World Scientific. 

\bibitem [No]{No} Noether, E.,  {\it Der Endlichkeitsatz der Invarianten endlicher linearer Gruppen der Charakteristik p}, Nachrichten von der Gesellschaft der Wissenschaften zu G�ttingen: (1926) 28-35.

\bibitem [SU]{SU} Stefanov, P., G. Uhlmann, G., {\it Boundary rigidity and stability for generic simple metrics}, J. Amer. Math. Soc., 18(4): 975-1003, 2005.

\bibitem  [SU1]{SU1} Stefanov, P., G. Uhlmann, G., {\it Boundary and lens rigidity, tensor holography, and analytic microlocal analysis}, In Algebraic Analysis of Differential Equations. Springer, 2008. 

\bibitem [SU2]{SU2} Stefanov, P., G. Uhlmann, G., {\it Local lens rigidity with incomplete data for a class of non-simple Riemannian manifolds}, J. Differential Geom., 82(2), 383-409, 2009. 

\bibitem [SUV]{SUV} Stefanov, P., G. Uhlmann, G., Vasy, A., {\it Boundary rigidity with partial data}, J. Amer. Math. Soc., 29, 299-332 (2016), arXiv.1306.2995. 


\bibitem [SUV1]{SUV1} Stefanov, P., G. Uhlmann, G., Vasy, A., {\it Inverting the local geodesic $X$-ray transform on tensors}, Journal d'Analyse Mathematique, to appear, arXiv:1410.5145. 



\end{thebibliography}
\end{document}